\newtheorem{theorem}{Theorem}[section]
\newtheorem{lemma}[theorem]{Lemma}
\newtheorem{proposition}[theorem]{Proposition}
\numberwithin{equation}{section}
\newcommand{\Dfn}[1]{\emph{\color{blue}#1}} 
\newcommand{\FindStat}[1]{%
  \ifx&#1&%
  \url{www.findstat.org}
  \else \url{www.findstat.org/#1}
  \fi}%
\newcommand{\Sym}{\ensuremath{\mathfrak{S}}}
\newcommand{\runsort}{runsort}
\newcommand{\reverse}{reverse}
\newcommand{\invinv}{\widecheck{\mathrm{Inv}}}
\mathchardef\mhyphen="2D
\newcommand{\ascocc}{13\mhyphen 2}
\DeclareMathSymbol{\mstar}{\mathalpha}{symbols}{"03}
\newcommand{\desview}{31\mstar 2}
\newcommand{\dv}{d} 
\newcommand{\RUNS}{R} 
\newcommand{\RT}{RT} 
\newcommand{\RB}{RB} 
\DeclareMathOperator{\rt}{rt} 
\title{An equidistribution involving invisible inversions}
\author[M. Coopman]{Michael Coopman}
\address[MC]{Department of Mathematics, University of Florida, Gainesville, FL 32601}
\email{m.coopman@ufl.edu}
\author[M. Rubey]{Martin Rubey}
\address[MR]{Fakult\"at f\"ur Mathematik and Geoinformation, TU Wien, Austria}
\email{martin.rubey@tuwien.ac.at}
\thanks{Martin Rubey was supported by the Austrian Science Fund (FWF): P 29275.}
\begin{document}
\begin{abstract}
  We provide two explicit bijections demonstrating that, among
  permutations, the number of invisible inversions is equidistributed
  with the number of occurrences of the vincular pattern $\ascocc$
  after sorting the set of runs.
\end{abstract}

\maketitle

%
\section{Introduction}
The set of \Dfn{visible inversions}, a certain subset of the usual inversion set of a permutation, was introduced by Z.~Hamaker, E.~Marberg and P.~Pawlowski in~\cite{MR4015854} to describe the rank function of the restriction of the Bruhat order to involutions.

In this note we are concerned with the complementary set of
\Dfn{invisible inversions}\footnote{\FindStat{St001727}}
$\invinv(\pi)$ of a permutation $\pi$.  This is the set of pairs
$(i, j)$ such that $i < j$ and $\pi(i) > \pi(j) > i$.  Querying the
database of combinatorial statistics~\FindStat{} suggested that the
number of invisible inversions is equidistributed with the number of
occurrences of the vincular pattern $\ascocc$ after sorting the set
of runs.  Our goal is to prove a refinement of this guess.  We note
that the map sorting the set of runs of a permutation appears to have
some remarkable properties, as recently demonstrated by
P.~Alexandersson and O.~Nabawanda in~\cite{arXiv:2104.04220}.

An \Dfn{ascending run} of $\pi$ is a maximal consecutive subsequence
of $\pi$ in its one-line notation that is increasing.  Let
\Dfn{$\runsort(\pi)$} be the permutation obtained by rearranging the
set of ascending runs in increasing order of their minimal
elements\footnote{\FindStat{Mp00223}}.  Furthermore, let
\Dfn{$\ascocc(\pi)$}\footnote{\FindStat{St000356}} be the set of
pairs $(i,j)$ such that $i < j$ and $\pi(i) < \pi(j) < \pi(i+1)$.
Then, it turns out that
\[
\sum_{\pi\in\Sym_n} q^{|\invinv(\pi)|} %
= \sum_{\pi\in\Sym_n} q^{|\ascocc(\runsort(\pi))|}.
\]

In fact, this result can be refined significantly.  To state the refinement, let \Dfn{$\desview(\pi)$} be the set of pairs $(i,j)$ such that $\pi(i) > \pi(j) > \pi(i+1)$ and the minimal element of the descending run containing $\pi(i)$ and $\pi(i+1)$ is smaller than the minimal element of the descending run containing $\pi(j)$.  In this case we call $\pi(j)$ a \Dfn{descent view}.

Thus, $|\desview(\pi)| = |\ascocc(\runsort(\reverse(\pi)))|$.  Furthermore, recall that a \Dfn{global ascent} of a permutation $\pi$ is an index $m$ such that $\pi$ restricted to $[m]$ is a permutation.  Finally, a \Dfn{left-to-right maximum} of $\pi$ is a value $v=\pi(j)$ such that $\pi(i) < v$ for all $i < j$.
\begin{theorem}
  \label{t:main}
  There is an explicit bijection $\chi:\Sym_n\to\Sym_n$ such that, setting $\sigma=\chi(\pi)$,
  \begin{itemize}
  \item the multiset $\{\{\pi(j) \mid (i,j)\in\desview(\pi) \}\}$ of descent views is the multiset $\{\{\sigma(j) \mid (i,j)\in\invinv(\sigma) \}\}$ of invisible inversion bottoms,
  \item the set of maximal elements of the descending runs of $\pi$ is the set of positions of the weak deficiencies of $\sigma$,
  \item the set of minimal elements of the descending runs of $\pi$ is the set of values of the weak deficiencies of $\sigma$,
  \item the sets of global ascents of $\pi$ and $\sigma$ are the same, and
  \item the set of left-to-right maxima of $\pi$ is the set of maximal elements in the cycles of $\sigma$.
  \end{itemize}
\end{theorem}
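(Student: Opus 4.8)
The plan is to realize $\chi$ through Foata's fundamental correspondence. Write $F\colon\Sym_n\to\Sym_n$ for the map that takes a permutation $\rho$, writes each of its cycles starting from its largest element, orders the cycles by increasing largest element, and erases the parentheses to obtain a one-line word $F(\rho)$; this is a bijection whose defining feature is that the left-to-right maxima of $F(\rho)$ are exactly the cycle maxima of $\rho$. I would then \emph{define}
\[
\chi(\pi)=\bigl(F^{-1}(\pi)\bigr)^{-1},
\]
so that, setting $\rho=F^{-1}(\pi)$ and $\sigma=\chi(\pi)=\rho^{-1}$, the permutation $\pi=F(\rho)$ is recovered by cutting $\pi$ before each left-to-right maximum and reading the resulting blocks as the cycles of $\rho$. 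As a composite of two bijections, $\chi$ is a bijection. The last bullet is then immediate: $\sigma=\rho^{-1}$ has the same cycles as $\rho$, hence the same cycle maxima, and these are the left-to-right maxima of $\pi=F(\rho)$.

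The structural observation driving the remaining bullets is that in the word $\pi=F(\rho)$ two consecutive letters are always of the form $x,\rho(x)$ inside a block and strictly increasing across a block boundary. Consequently the descents of $\pi$ are exactly the pairs $\bigl(x,\rho(x)\bigr)$ with $x>\rho(x)$, i.e.\ the deficiencies of $\rho$, while the within-block ascents record the weak excedances. First I would settle the second and third bullets: a short analysis of run-starts and run-ends against this description shows that the maximal elements of the descending runs of $\pi$ are precisely the tops of the weak excedances of $\rho$, and the minimal elements precisely the corresponding bottoms. Passing through $\sigma=\rho^{-1}$ converts the weak-excedance tops of $\rho$ into the positions, and the weak-excedance bottoms into the values, of the weak deficiencies of $\sigma$, as required.

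For the fourth bullet I would show that for each of $\pi$ and $\sigma$ the global ascents are exactly those $m$ for which $[m]$ is a union of cycles of $\rho$. On the $\sigma$ side this is immediate, since $\sigma$ and $\rho$ share their cycles, so $\sigma([m])=[m]$ iff $[m]$ is a union of cycles. On the $\pi$ side it follows because $F$ orders blocks by increasing maximum: an initial segment of letters of $F(\rho)$ equals $[m]$ if and only if it is a union of complete blocks, because a block whose maximum lies in $[m]$ has all of its letters $\le m$, and hence cannot be split by the cut at position $m$. The two descriptions coincide, giving the equality of global ascents.

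The heart of the argument, and the step I expect to be the main obstacle, is the first bullet, which I would reduce to a statement about multiplicities: for each value $v$, the number of descents of $\pi$ witnessing $v$ as a descent view must equal the number of invisible inversions of $\sigma$ with bottom $v$. Using $\sigma=\rho^{-1}$, an invisible inversion with bottom $b$ is recorded by a value $a$ with $a>b$, $\rho(a)<b$ and $\rho(a)<\rho(b)$; on the other side, by the description of descents above, $v$ being a descent view through the descent $\bigl(x,\rho(x)\bigr)$ means $x>v>\rho(x)$ together with the run-minimum condition of $\desview$. Setting $a=x$ and $b=v$ matches the inequalities $a>b$ and $\rho(a)<b$ verbatim, so everything reduces to showing that the comparison of run-minima on the $\pi$ side is equivalent to the inequality $\rho(a)<\rho(b)$, i.e.\ $\rho(x)<\rho(v)$, on the $\sigma$ side. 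I would prove this equivalence by expressing the minimal element of the descending run containing a value $y$ as the first weak-excedance bottom reached on the forward $\rho$-orbit of $y$, and then comparing these iterates for $x$ and for $v$; the bookkeeping that identifies this run-minimum comparison with the single inequality $\rho(x)<\rho(v)$ is the delicate part. If a direct matching of witnessing pairs proves unwieldy, I would instead argue by induction on $n$ through insertion of the largest letter, verifying that each of the five bullets is preserved under the insertion.
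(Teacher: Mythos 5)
Your proposed map $\chi(\pi)=\bigl(F^{-1}(\pi)\bigr)^{-1}$ does handle the second through fifth bullets correctly (the identification of run tops/bottoms of $F(\rho)$ with weak deficiency positions/values of $\rho^{-1}$, the block description of global ascents, and the cycle-maxima property are all sound), but it fails the first bullet, which is the heart of the theorem. Take the paper's running example $\pi = 52\,96\,8731\,4$. Its left-to-right maxima are $5$ and $9$, so $\rho = F^{-1}(\pi)$ has cycles $(5\,2)$ and $(9\,6\,8\,7\,3\,1\,4)$, giving $\sigma=\rho^{-1}=3\,5\,7\,1\,2\,9\,8\,6\,4$. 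The multiset of invisible inversion bottoms of this $\sigma$ is $\{\{2,4,4,6,8\}\}$, whereas the multiset of descent views of $\pi$ is $\{\{2,4,4,5,6\}\}$. The precise point of failure is the equivalence you flag as ``the delicate part'': the run-minimum comparison in the definition of $\desview$ is \emph{not} equivalent to $\rho(x)<\rho(v)$. Here $x=9$, $v=8$ gives $\rho(9)=6<7=\rho(8)$, so $(6,7)$ is an invisible inversion of $\sigma$ with bottom $8$; but the run $96$ containing $x=9$ has run bottom $6$, which is not smaller than the run bottom $1$ of the run $8731$ containing $v=8$, so $8$ is not a descent view of $\pi$.

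The paper's construction is genuinely different: its \cref{lem:sigma_exc} shows that the first three bullets \emph{uniquely determine} $\sigma$ on its exceedances, via a greedy assignment driven by the multiplicities $\dv_\pi(v)$, and the resulting partial permutation ($375\,{?}{?}\,89\,{?}{?}$ in the example) already disagrees with yours ($357\,{?}{?}\,98\,{?}{?}$). By that uniqueness, no completion of your exceedance data can satisfy the first bullet. Indeed the paper remarks that $\chi$ coincides with the (inverse) fundamental transformation only on the special class of permutations whose descending runs have length at most two and increasing maxima; in general the two maps differ. Your fallback of inducting on $n$ would not rescue the argument, since the map itself is wrong rather than merely hard to analyse.
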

We will show in \cref{sec:exc} that $\chi(\pi)$ is uniquely
determined on its set of exceedances by the requirements of
\cref{t:main}.  We will then present two slightly different ways to
define $\chi(\pi)$ on the set of weak deficiences, the first variant,
\FindStat{Mp00235}, in \cref{sec:first-variant}, and the second
variant, \FindStat{Mp00237}, in \cref{sec:second-variant}.

We remark that, for both bijections, the preimage of the set of involutions is the set of permutations whose descending runs all have length at most two and increasing maximal elements.  Restricting $\chi$ to this set we obtain the classical (inverse) fundamental transformation\footnote{\FindStat{Mp00086}} due to R\'enyi, and Foata and  Sch\"utzenberger.

It may be interesting to explore whether further bijections satisfying some of the
requirements of \cref{t:main} can be found.  One might speculate that encoding
permutations in terms of labelled Motzkin paths with two types of
horizontal steps, using the Foata-Zeilberger bijection or one of its
variants, could be helpful.  Doing so, the exceedances of $\chi(\pi)$
and their values would carry the same information as the Motzkin path
itself and its labels on the up steps and the horizontal steps of one
kind.

One variant of the Foata-Zeilberger bijection, between involutions
and Motzkin paths with only one kind of horizontal steps and labels
only on the down steps, was in fact used by Z.~Hamaker and the first
author~\cite{arxiv:2106.06021} to give a visual interpretation of the
number of visible inversions in involutions.

\section{Definitions}
A \Dfn{(descending) run} of $\pi$ is a maximal consecutive decreasing
subsequence of $\pi$ in its one-line notation.  From now on, we will only consider descending runs.  We denote the set of
runs of $\pi$ with $R(\pi)$. A \Dfn{run top/run bottom} is the
largest/smallest element of a run.

An \Dfn{exceedance/weak deficiency} of $\pi$ is a position $i\in [n]$ such that $\pi(i)$ is strictly greater/weakly less than $i$.
An \Dfn{inversion} of $\pi$ is a pair $(i,j)$ such that $i < j$ and $\pi(i) > \pi(j)$.  We call $\pi(j)$ the \Dfn{inversion bottom}.
An \Dfn{invisible inversion} additionally satisfies $i < \pi(j)$.

\section{The exceedances of \texorpdfstring{$\chi(\pi)$}{chi(pi)}}
\label{sec:exc}
We define the bijection $\chi:\Sym_n\to\Sym_n$ in two steps.  First we describe the exceedances of $\sigma = \chi(\pi)$.  The following is our key lemma.

\begin{lemma}\label{lem:sigma_exc}
  Let $\RB$ and $\RT$ be the set of run bottoms, respectively run
  tops, of $\pi$.  There exists a unique bijection
  $\sigma_e = \chi_e(\pi): [n]\setminus\RT\to[n]\setminus\RB$ whose multiset of
  inversion bottoms is the multiset of descent views of $\pi$ which
  are not run bottoms.  Moreover, $\sigma_e(i) > i$ for all
  $i\in [n]\setminus\RT$.
\end{lemma}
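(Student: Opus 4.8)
The plan is to separate the statement into three largely independent tasks: (i) uniqueness of a bijection with a prescribed inversion-bottom multiset, (ii) feasibility, i.e.\ that the multiset of non-run-bottom descent views is actually realizable, and (iii) the exceedance property $\sigma_e(i)>i$. Throughout I write $D=[n]\setminus\RT$ and $C=[n]\setminus\RB$, so that $\sigma_e$ is to be a bijection $D\to C$ between two subsets of $[n]$ of common size $k=|\RUNS(\pi)|$, each ordered by the natural order of the integers.

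For (i) I would first observe that the inversion-bottom multiset of any bijection $f\colon D\to C$ records, for each $v\in C$, the number $c_v$ of elements $w\in C$ with $w>v$ whose preimage satisfies $f^{-1}(w)<f^{-1}(v)$. Reading off the multiplicities $c_v$ from the multiset and inserting the values of $C$ from largest to smallest—placing $v$ immediately after the $c_v$-th already-placed, hence larger, value—reconstructs $f$ uniquely. This is the classical fact that a permutation is determined by its value-indexed inversion table, transported along the order isomorphisms $D\cong[k]$ and $C\cong[k]$. In particular this establishes uniqueness directly and uses nothing about exceedances; existence then reduces to checking that the target multiset satisfies the feasibility bounds $0\le c_v\le|\{w\in C: w>v\}|$ under which this reconstruction succeeds, and $\sigma_e(i)>i$ becomes a genuinely separate claim.

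For (ii) the key preliminary is a run-theoretic reformulation of descent views. Unwinding the definition, if $v$ is not a run bottom and $\mathbf r'$ denotes the run containing $v$, then $v$ occurs as a descent view exactly once for each run $\mathbf r$ with $\rb(\mathbf r)<\rb(\mathbf r')$ and $\rt(\mathbf r)>v$: inside such an $\mathbf r$ the values $\rb(\mathbf r)<v<\rt(\mathbf r)$ force a unique consecutive descent straddling $v$. Hence the prescribed multiplicity is $m_v=\#\{\mathbf r\in\RUNS(\pi):\rb(\mathbf r)<\rb(\mathbf r'),\ \rt(\mathbf r)>v\}$. To obtain the feasibility inequality $m_v\le|\{w\in C:w>v\}|$ I would exhibit an explicit injection: each run $\mathbf r$ counted by $m_v$ contains a smallest element exceeding $v$, which—because $\rb(\mathbf r)<v$—is not a run bottom, hence lies in $C\cap(v,n]$, and distinct runs have disjoint value sets, so these elements are distinct.

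The hard part will be (iii), the exceedance property. Translating through the reconstruction of (i) and letting $\rho\in\Sym_k$ be the permutation with $\sigma_e(d_a)=c_{\rho(a)}$, where $D=\{d_1<\dots<d_k\}$ and $C=\{c_1<\dots<c_k\}$, the claim $\sigma_e(d_a)>d_a$ becomes the arithmetic inequality $c_{\rho(a)}>d_a$ for every $a$. Since $\rho$ is governed by the counts $m_v$ while the $d_a$ and $c_b$ are governed by the run structure, this couples combinatorics and arithmetic and cannot be reduced to ranks alone. I expect to prove it by induction along the largest-to-smallest insertion of (i), maintaining the invariant that the partial assignment already respects exceedance and that enough positions of $D$ below each not-yet-placed value remain available; the injection constructed in (ii) is precisely what guarantees that the large values forced to the left of $v$ push the slot of $v$ strictly below $v$. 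Making this bookkeeping precise—in particular verifying that the slot dictated by $m_v$ always lands on an available element of $D$ smaller than $v$—is where the real work of the lemma lies.
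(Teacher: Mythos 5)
Your parts (i) and (ii) are sound and run parallel to the paper: uniqueness is the standard inversion-table reconstruction (the paper does it smallest-to-largest on values, assigning preimages, rather than largest-to-smallest insertion, but that is cosmetic), your formula $m_v=\dv_\pi(v)=\#\{r\in\RUNS\mid \rb(r)<b\text{ and }\rt(r)>v\}$ agrees with the paper's, and your injection sending each such run to its smallest element exceeding $v$ is a nice explicit justification of the feasibility bound $\dv_\pi(v)\le\#\{w\in[n]\setminus\RB\mid w>v\}$, which the paper asserts without proof.

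The genuine gap is part (iii), which you correctly identify as the heart of the lemma but leave as a plan, and the plan as stated would not close it. The injection from (ii) bounds $\dv_\pi(v)$ by the number of elements of $[n]\setminus\RB$ \emph{above} $v$; unwinding the ranks, that only yields $\dv_\pi(v)+\#\{w\in[n]\setminus\RB\mid w<v\}+1\le |[n]\setminus\RT|$, i.e.\ that the slot for $v$ exists somewhere in the domain --- feasibility again, not exceedance. Since in the worst case every smaller value of the codomain ends up to the left of $v$, the exceedance property $\sigma_e^{-1}(v)<v$ requires the different and strictly stronger inequality
\[
\dv_\pi(v) \;<\; \#\{w\in\RB\mid w<v\}-\#\{w\in\RT\mid w<v\},
\]
whose proof is the actual content of the lemma: the right-hand side equals the number of runs $r$ with $\min(r)<v\le\max(r)$, every run counted by $\dv_\pi(v)$ is such a run, and the run containing $v$ is also such a run but is \emph{not} counted by $\dv_\pi(v)$ (its run bottom equals $b$, not $<b$), giving strictness. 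Nothing in your injection produces this count, so the key step is missing rather than merely unpolished. A secondary obstruction to your bookkeeping: in largest-to-smallest insertion the preimage of $v$ in $D$ is not fixed when $v$ is inserted (later, smaller values may still be placed to its left, pushing it rightward), so the invariant ``the partial assignment already respects exceedance'' is not stable under later insertions; the paper's smallest-to-largest construction fixes $\sigma_e^{-1}(v)$ permanently at the step where $v$ is processed, which is what makes the induction go through.
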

\begin{proof}
  Let $\dv_{\pi}(v)$ be the number of $\pi(i)$'s that make $v$ a descent view, that is, the multiplicity of $v$ in the multiset of descent views.
  Let $v\in[n]\setminus\RB$.  Thus,  $\dv_\pi(v)$ is the number
  of positions $i$ such that $\pi(i+1) < v < \pi(i)$ and the run
  containing $\pi(i+1)$ and $\pi(i)$ has a smaller run bottom than
  the run containing $v$.  Therefore, if $\RUNS$ is the set of runs
  of $\pi$ and $b$ is the run bottom of the run containing $v$,
  \[
    \dv_\pi(v) = \#\{ r\in\RUNS \mid \min(r) < v < \max(r) \text{ and } \min(r) < b \}.
  \]
  We now define $\sigma_e$ iteratively.  Let $S=[n]\setminus\RT$.
  For $v\in[n]\setminus\RB$, beginning with the smallest, we let $\sigma_e^{-1}(v)$ be the
  $(\dv_\pi(v)+1)$-st element in $S$ and then remove this element
  from $S$.

  When defining $\sigma_e^{-1}(v)$, there are
  $|[n]\setminus\RB| - \#\{w \in [n]\setminus\RB \mid w < v\}$ elements in
  $S$, which is at least $\dv_\pi(v)+1$ because
  \begin{multline*}
    \dv_\pi(v) + 1 \leq \#\{ w \in [n]\setminus\RB \mid w > v\} + 1 \\
    = |[n]\setminus\RB| - \#\{ w \in [n]\setminus\RB \mid w < v\},
  \end{multline*}
  which ensures that $\sigma_e$ is well-defined.

  Uniqueness follows from the fact that $v$ is the inversion bottom
  of precisely those $\dv_\pi(v)$ elements whose preimage is in
  what remains of $S$ but is smaller than $\sigma_e^{-1}(v)$.

  It remains to show that
  $\sigma_e^{-1}(v) < v$.  Initially, there are
  \[
    v-1-\#\{ w\in\RT \mid w < v \}
  \]
  elements in $S$ that are strictly less than $v$.  When defining
  $\sigma_e^{-1}(v)$, we can assume by induction that only elements
  strictly less than $v$ were removed from $S$.  Moreover, the number
  of removed elements equals $\#\{ w\in[n]\setminus\RB\mid w < v\}$.
  Thus, the number of elements strictly less than $v$ in what remains
  of $S$ is
  \[
    v-1-\#\{ w\in\RT \mid w < v \} - \#\{ w\in[n]\setminus\RB\mid w < v\}.
  \]
  It remains to show that this is larger than $\dv_\pi(v)$, which
  follows from these two observations:
  \begin{multline*}
  v-1-\#\{ w\in[n]\setminus\RB \mid w < v\}%
    =\#\{ w\in\RB \mid w < v \}, \qquad \quad
  \end{multline*}
  and
  \begin{multline*}
    \#\{ w\in\RB \mid w < v \} - \#\{ w\in\RT \mid w < v \} \\
    = \#\{ r\in\RUNS \mid \min(r) < v \leq \max(r)\} > \dv_\pi(v).
  \end{multline*}
\end{proof}

\begin{lemma}\label{lem:runs}
  Let $\pi_1$ and $\pi_2$ be two permutations.  Then
  \[
  \chi_e(\pi_1) = \chi_e(\pi_2) \Leftrightarrow R(\pi_1) = R(\pi_2).
  \]
\end{lemma}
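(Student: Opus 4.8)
The plan is to prove the two implications separately. The forward implication, that equal run sets force equal $\chi_e$, is immediate from the construction; the converse, that equal $\chi_e$ forces equal run sets, is the substantial part.

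For $R(\pi_1)=R(\pi_2)\Rightarrow\chi_e(\pi_1)=\chi_e(\pi_2)$, I would observe that every ingredient of the construction of $\sigma_e$ in \cref{lem:sigma_exc} is a function of the unordered set of runs alone. Indeed, $\RT$ and $\RB$ are read off from $R(\pi)$, and the displayed formula $\dv_\pi(v)=\#\{r\in\RUNS\mid\min(r)<v<\max(r)\text{ and }\min(r)<b\}$ for the descent-view multiplicities refers only to the tops and bottoms of the runs and to the bottom $b$ of the run containing $v$, never to the left-to-right order in which the runs occur in $\pi$. Since the iterative definition of $\sigma_e$ uses nothing beyond $\RT$, $\RB$ and the numbers $\dv_\pi(v)$, two permutations with the same run set yield the same $\sigma_e$.

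For the converse I would recover $R(\pi)$ from $\sigma_e=\chi_e(\pi)$. The sets $\RT=[n]\setminus\operatorname{dom}(\sigma_e)$ and $\RB=[n]\setminus\operatorname{im}(\sigma_e)$ are the complements of the domain and image, and by the defining property of $\sigma_e$ the number $\dv_\pi(v)$ equals the multiplicity of $v$ among the inversion bottoms of $\sigma_e$, for each $v\notin\RB$. It thus suffices to show that $\RT$, $\RB$ and the function $v\mapsto\dv_\pi(v)$ determine the set of runs. Listing the runs in increasing order of their bottoms $b_1<\dots<b_k$ with corresponding tops $T_1,\dots,T_k$, and letting $m(v)$ be the rank of the run containing $v$, the condition $\min(r)<b\le v$ makes the inequality $\min(r)<v$ redundant, so the formula above becomes
\[
\dv_\pi(v)=\#\{s<m(v)\mid T_s>v\}.
\]

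I would then reconstruct the top--bottom matching and the assignment of the interior values $[n]\setminus(\RT\cup\RB)$ to runs by decoding these counts as an inversion-table-type statistic, processing values in decreasing order and placing each one into the run forced by its count, while using that a singleton run (an element of $\RT\cap\RB$) has both its rank in the bottom order and its top value prescribed directly by $\RB$. The main obstacle is proving that this decoding is well defined and returns a unique run set: the counts on the run tops alone need not determine the matching, so the argument must combine them with the counts on the interior values and the membership constraint $\min(r)<v<\max(r)$ to eliminate the remaining ambiguity, and it must handle with care the singletons in $\RT\cap\RB$, whose descent-view multiplicities are not recorded by $\sigma_e$.
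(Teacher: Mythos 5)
Your forward implication and your setup for the converse both match the paper: the paper likewise notes that the multiset of descent views depends only on the run set, and its reconstruction also starts from $\RT=[n]\setminus\operatorname{dom}(\sigma_e)$, $\RB=[n]\setminus\operatorname{im}(\sigma_e)$ and the multiplicities $\dv_\pi(v)$ read off the inversion bottoms of $\sigma_e$ for $v\notin\RB$. Your simplification $\dv_\pi(v)=\#\{s<m(v)\mid T_s>v\}$ is correct and is exactly the right lens. But the decisive step of the converse --- the decoding algorithm and the proof that it is forced --- is precisely what you leave open (``the main obstacle is proving that this decoding is well defined and returns a unique run set''), so as written this is a plan, not a proof. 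Moreover, the one concrete choice you do make, processing values in \emph{decreasing} order, fails at the very first step: the largest run top $t$ always has $\dv_\pi(t)=0$ (no run has a top exceeding it), so its count forces nothing about which run bottom it is matched to.

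The paper resolves this by processing run tops in \emph{increasing} order. For the smallest not-yet-matched run top $t$ that is not itself a run bottom, every run with index $s<m(t)$ still has $T_s>t$, so your formula collapses to $\dv_\pi(t)=m(t)-$(number of already-matched runs with smaller bottom)$-1$; hence $t$ is matched to the $(\dv_\pi(t)+1)$-st smallest run bottom not yet matched, with no ambiguity. Singletons in $\RT\cap\RB$ are matched to themselves and need no count, which disposes of the issue that their multiplicities are invisible in $\sigma_e$. Only after the matching is fixed are the interior values $v\in[n]\setminus(\RT\cup\RB)$ placed: $v$ lies in the run with $(\dv_\pi(v)+1)$-st smallest bottom among the runs $r$ with $\min(r)<v<\max(r)$, and this step needs no particular processing order. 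Note also that, contrary to your worry, the interior counts are not needed to determine the matching; the run-top counts (in increasing order) together with $\RT$ and $\RB$ already suffice. To complete your proof you would need to replace the decreasing-order greedy decoding with this increasing-order argument, or supply a genuinely different uniqueness argument, which you have not done.
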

\begin{proof}
  If the sets of runs of $\pi_1$ and $\pi_2$ coincide we have
  $\chi_e(\pi_1) = \chi_e(\pi_2)$, because the multiset of descent
  views of a permutation only depends on its set of runs.

  Conversely, $\chi_e(\pi_1) = \chi_e(\pi_2)$ immediately implies
  that the sets of run bottoms and run tops of $\pi_1$ and $\pi_2$
  are the same.  We show how to reconstruct the set of runs of a
  permutation $\pi$ from $\chi_e(\pi)$.  Let us first determine the
  matching between run bottoms and run tops.

  Let $t$ be a run top of $\pi$, beginning with the smallest.
  Suppose that $t$ is not a run bottom.  In this case $\dv_\pi(t)$ is
  the multiplicity of $t$ in the multiset of inversion bottoms of
  $\chi_e(\pi)$.  Then, as $t$ completes $\dv_{\pi}(t)$ descent
  views, $t$ must be the run top for the $(\dv_\pi(t)+1)$-st smallest
  run bottom which is not yet matched with a run top.  If $t$ is a
  run bottom, $\dv_{\pi_1}(t)$ is not determined by $\chi_e(\pi_1)$.
  However, in this case it is clearly a singleton run in both $\pi_1$
  and $\pi_2$.

  Finally, suppose the $v$ is neither a run bottom nor a run
  top. Then, among the runs with smaller run bottom and larger run
  top, $v$ belongs to the one with $(\dv_\pi(v)+1)$-st smallest run
  bottom.
\end{proof}

\begin{lemma}\label{lem:multiset}
  Let $\pi$ be a permutation and let $\sigma$ be any permutation that agrees with $\sigma_e(\pi)$ on the set of exceedances and has no other exceedances.  Then the multiset of invisible inversion bottoms of $\sigma$ equals the multiset of descent views of $\pi$.
\end{lemma}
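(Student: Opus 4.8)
The plan is to classify the invisible inversions of $\sigma$ by the type of their top and bottom and to match each class against a piece of the multiset of descent views of $\pi$. The first observation I would record is that the \emph{top} of every invisible inversion sits at an exceedance position: if $(i,j)$ is an invisible inversion of $\sigma$ then $\sigma(i) > \sigma(j) > i$, so in particular $\sigma(i) > i$, forcing $i \in [n]\setminus\RT$ and hence $\sigma(i) = \sigma_e(i)$. This is the structural fact that makes the statement independent of how the deficiency part of $\sigma$ is filled in: the unspecified entries can only ever contribute inversion bottoms, never tops.

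Next I would split according to whether the bottom $\sigma(j)$ is an exceedance value or a deficiency value. If $\sigma(j) \in [n]\setminus\RB$ is an exceedance value, then $j \in [n]\setminus\RT$ with $\sigma(j) = \sigma_e(j) > j$. I claim that in this case the invisible inversions of $\sigma$ are exactly the inversions of $\sigma_e$, with identical bottoms. Indeed, for any inversion $(i,j)$ of $\sigma_e$ we have $\sigma(j) = \sigma_e(j) > j > i$, so the invisibility condition $i < \sigma(j)$ holds automatically; conversely, any invisible inversion with both endpoints at exceedance positions is an inversion of $\sigma_e$. By \cref{lem:sigma_exc}, the multiset of these bottoms is precisely the multiset of descent views of $\pi$ that are not run bottoms.

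It remains to treat invisible inversions whose bottom $b = \sigma(j)$ is a deficiency value, i.e.\ $b \in \RB$. Here I would fix $b$ and count. Its position $j = \sigma^{-1}(b)$ lies in $\RT$ and satisfies $b \le j$, since $\sigma$ has no exceedance there. This inequality is the crux and the main obstacle of the lemma, namely showing the count does not depend on the arbitrary deficiency entries: an invisible inversion with bottom $b$ requires $i < b$ (from $\sigma(j) = b > i$), and $i < b \le j$ already yields $i < j$, so the constraint $i < j$ is redundant. Combined with the fact that the top lies at an exceedance position, the number of invisible inversions of $\sigma$ with bottom $b$ equals $\#\{\,i \in [n]\setminus\RT : i < b,\ \sigma_e(i) > b\,\}$, an expression depending only on the fixed bijection $\sigma_e$.

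Finally I would evaluate this cardinality by a balance argument. Because $\sigma_e(i) > i$, any $i$ with $\sigma_e(i) < b$ automatically has $i < b$, so $\#\{i : i < b,\ \sigma_e(i) < b\} = \#\{v \in [n]\setminus\RB : v < b\}$; subtracting this from $\#\{i \in [n]\setminus\RT : i < b\}$ and rewriting both in terms of run tops and run bottoms gives $\#\{c \in \RB : c < b\} - \#\{t \in \RT : t < b\} = \#\{r \in \RUNS : \min(r) < b \le \max(r)\}$. Since $b$ is a run bottom, no run can satisfy $\min(r) < b = \max(r)$, so this equals $\#\{r \in \RUNS : \min(r) < b < \max(r)\} = \dv_\pi(b)$, the multiplicity of $b$ among the descent views, as computed in the proof of \cref{lem:sigma_exc}. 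Thus the deficiency-value bottoms reproduce exactly the descent views that are run bottoms, and together with the exceedance-value bottoms the two pieces assemble into the full multiset of descent views of $\pi$, completing the argument.
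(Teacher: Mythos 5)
Your proof is correct and takes essentially the same route as the paper's: both split the invisible inversion bottoms into exceedance values (where invisibility is automatic and \cref{lem:sigma_exc} finishes the job) and run bottoms $b$, and for the latter both reduce the count of exceedance positions $i<b$ with $\sigma(i)>b$ to the number of runs straddling $b$, i.e.\ to $\dv_\pi(b)$. The only difference is presentational: the paper organizes this count as a double partition of $[b-1]$ with explicit bijections between the parts, while you obtain the same identity by direct subtraction, reusing the run-top/run-bottom balance already computed at the end of the proof of \cref{lem:sigma_exc}.
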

\begin{proof}
  Let $\RB$ be the set of run bottoms of $\pi$.  If $v\in [n]\setminus\RB$, then by \cref{lem:sigma_exc} $v$ is the value of an exceedance of $\sigma$.  Therefore, $v$ is an invisible inversion bottom if and only if it is an inversion bottom.  Thus, the multiplicity of $v$ in the multiset of invisible inversion bottoms equals the multiplicity of $v$ in the multiset of descent views of $\pi$.

  Suppose now that $v\in\RB$.  We partition the set $[v-1]$ in two ways.  For any element $w$ let $\rt(w)$ be the run top in the decreasing run containing $w$. Then the first partition of $[v-1]$ is as follows:
  \begin{align*}
    A &= \{w\in[v-1]\mid w\not\in\RB\}\\
    B &= \{w\in[v-1]\mid w\in\RB\text{ and }\rt(w) \leq v\}\\
    C &= \{w\in[v-1]\mid w\in\RB\text{ and }\rt(w) > v\}.
  \end{align*}
  Note that $\#C= \dv_\pi(v)$.
  For the second partition of $[v-1]$, let
  \begin{align*}
    F &= \{i\in[v-1]\mid \sigma(i)\leq i\}\\
    G &= \{i\in[v-1]\mid \sigma(i) > i \text{ and }\sigma(i)\leq v\}\\
    H &= \{i\in[v-1]\mid \sigma(i) > i \text{ and }\sigma(i) > v\}.
  \end{align*}
  Note that $\#H$ is the multiplicity of $v$ in the multiset of
  invisible inversions bottoms, because with
  $j=\sigma^{-1}(v) \geq v$ we have $i < j$ and
  $i < v=\sigma(j) < \sigma(i)$ for any $i\in H$.  Moreover,
  $\# F = \# B$, since every element $i$ of $F$ is a run top of
  $\pi$, and mapping $i$ to the corresponding run bottom yields a
  bijection.  Finally, $\# G = \# A$, with $\sigma$ restricted to $G$
  serving as a bijection: because any $i\in G$ is an exceedance of
  $\sigma$ we find that $\sigma(i)$ is not a run bottom of $\pi$ and
  therefore $\sigma(i) \neq v$.

  We conclude that $\# C = \# H$, which is what we wanted to prove.
\end{proof}

\begin{proposition}
  \label{p:global_ascents}
  Let $\pi$ be a permutation and let $\sigma$ be any permutation that
  agrees with $\chi_e(\pi)$ on the set of exceedances and that has
  no other exceedances.  Then the sets of global ascents of $\pi$ and
  $\sigma$ are the same.
\end{proposition}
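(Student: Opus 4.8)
The plan is to reduce the assertion to a statement about the exceedance map $\sigma_e=\chi_e(\pi)$ alone, so that the undetermined weak-deficiency part of $\sigma$ plays no role. By \cref{lem:sigma_exc} the exceedance positions of $\sigma$ are $[n]\setminus\RT$ and its weak-deficiency positions are $\RT$; in particular every $i\in\RT$ satisfies $\sigma(i)\le i$. Since $m$ is a global ascent of $\sigma$ exactly when $\sigma(i)\le m$ for all $i\le m$, and the weak deficiencies satisfy this automatically, I would first observe that $m$ is a global ascent of $\sigma$ if and only if $\sigma_e(i)\le m$ for every exceedance position $i\le m$. Writing $A_m=\{i\in[n]\setminus\RT\mid i\le m\}$ and $B_m=\{v\in[n]\setminus\RB\mid v\le m\}$ and using that the values of $\sigma_e$ lie in $[n]\setminus\RB$, this condition reads $\sigma_e(A_m)\subseteq B_m$. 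This already shows the condition is independent of the extension of $\sigma_e$ to the weak deficiencies, so the proposition is well posed.

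Next I would record two facts. From $\#A_m=m-\#\{t\in\RT\mid t\le m\}$ and $\#B_m=m-\#\{b\in\RB\mid b\le m\}$, the same run-counting identity used in \cref{lem:sigma_exc} gives $\#A_m-\#B_m=s_m$, where $s_m$ denotes the number of runs $r$ with $\min(r)\le m<\max(r)$; in particular $\#A_m\ge\#B_m$. Separately, I would characterise the global ascents of $\pi$ itself: $m$ is a global ascent of $\pi$ if and only if $s_m=0$, that is, no run straddles the value $m$. For the forward direction, a straddling run would carry a value exceeding $m$ followed, further to the right, by a value at most $m$, and a global ascent forces both of the corresponding positions to exceed $m$, a contradiction. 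For the converse, the ascent $\min(r)<\max(r')$ between consecutive runs $r,r'$ prevents a run with all values above $m$ from being directly followed by one with all values at most $m$; hence the runs with top at most $m$ form a prefix, and as they carry precisely the values $[m]$ they occupy precisely the positions $[m]$.

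It then remains to prove $\sigma_e(A_m)\subseteq B_m\iff s_m=0$, which links the two global-ascent conditions. If $\sigma_e(A_m)\subseteq B_m$, injectivity of $\sigma_e$ gives $\#A_m\le\#B_m$, which together with $\#A_m\ge\#B_m$ forces $s_m=0$. Conversely, if $s_m=0$ then $\#A_m=\#B_m$, and the inequality $\sigma_e^{-1}(v)<v$ from \cref{lem:sigma_exc} shows $\sigma_e^{-1}(B_m)\subseteq A_m$; equality of cardinalities then upgrades this to $\sigma_e^{-1}(B_m)=A_m$, that is $\sigma_e(A_m)=B_m$. I expect the only real obstacle to be the initial reduction, namely recognising that the weak-deficiency part of $\sigma$ cannot affect a global ascent; once that is in place the argument is a short count built on the two facts $\#A_m-\#B_m=s_m$ and $\sigma_e^{-1}(v)<v$.
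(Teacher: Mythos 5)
Your proof is correct, but it routes the argument differently from the paper, most visibly in the direction ``global ascent of $\pi$ $\Rightarrow$ global ascent of $\sigma$''. The paper proves that direction structurally: a global ascent at $m$ splits every descent view and every invisible inversion into $[m]$ or its complement, so by the uniqueness in \cref{lem:sigma_exc} the restriction of $\chi_e(\pi)$ to $[m]$ equals $\chi_e(\pi|_{[m]})$, and the claim follows. You instead reduce both global-ascent conditions to the single quantity $s_m=\#\{r\mid \min(r)\le m<\max(r)\}$: you show $m$ is a global ascent of $\pi$ iff $s_m=0$, that $\#A_m-\#B_m=s_m$, and that the exceedance condition $\sigma_e(A_m)\subseteq B_m$ is equivalent to $s_m=0$ --- using injectivity for one implication and the inequality $\sigma_e^{-1}(v)<v$ from the proof of \cref{lem:sigma_exc} for the other. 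This makes the two directions symmetric and purely enumerative, at the price of leaning on an inequality that in the paper is internal to the proof of \cref{lem:sigma_exc} rather than part of its statement (the statement does record $\sigma_e(i)>i$, which is what you need). Your converse direction essentially coincides with the paper's: both count run tops against run bottoms below $m$ to rule out straddling runs, and then use the ascents between consecutive runs to force the runs inside $[m]$ to form a prefix. Your opening reduction --- that the weak-deficiency part of $\sigma$ is irrelevant to whether $m$ is a global ascent --- is the same observation the paper makes in the sentence ``Since the remaining values of $\sigma$ are weak deficiencies \dots''. All steps check out; in particular the identity $\#\{b\in\RB\mid b\le m\}-\#\{t\in\RT\mid t\le m\}=s_m$ is the non-strict analogue of the one displayed at the end of the proof of \cref{lem:sigma_exc}.
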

\begin{proof}
  If $\pi$ has a global ascent at index $m$,
  $\overline\pi=(\pi(1),\dots,\pi(m))$ is a permutation of $[m]$.
  Moreover, any pair $(i, j) \in \desview(\pi)$ has either both $i$
  and $j$ in $[m]$, or both in the complement of $[m]$.  The same is
  true for pairs $(i, j) \in \invinv(\pi)$.

  Therefore, the restriction of $\chi_e(\pi)$ to $[m]$ is the same
  as $\chi_e(\overline\pi)$.  Since the remaining values of
  $\sigma$ are weak deficiencies, $\sigma$ also has a global ascent at
  index $m$.

  Conversely, suppose that $\sigma$ has a global ascent at index $m$.
  Since $\sigma$ agrees with $\chi_e(\pi)$ on the set of exceedances and has no other exceedances, it maps run tops of $\pi$ to run bottoms of $\pi$.  Because it has a global ascent at $m$, any run top larger than $m$ is mapped to a run bottom larger than $m$.  Therefore, the number of run tops of $\pi$ smaller than $m$ must be equal to the number of run bottoms of $\pi$ smaller than $m$.
  Thus, any run of $\pi$ has either both its run bottom and its run top, and therefore all of its elements contained in $[m]$ or all of its elements contained in the complement of $[m]$.
  Finally, any ordering of the runs of $\pi$ must be such that the runs contained in $[m]$ must come first.  This in turn implies that $\pi$ has a global ascent.
\end{proof}

\begin{figure}
  \centering
  \begin{tikzpicture}[baseline=(A), scale=0.4]
    \node (A) at (1, 1) {};
    \draw (1, 1) grid (10, 10);
    \node at (1.5,5.5) {$\bullet$};
    \node at (2.5,2.5) {$\bullet$};
    \node at (3.5,9.5) {$\bullet$};
    \node at (4.5,6.5) {$\bullet$};
    \node at (5.5,8.5) {$\bullet$};
    \node at (6.5,7.5) {$\bullet$};
    \node at (7.5,3.5) {$\bullet$};
    \node at (8.5,1.5) {$\bullet$};
    \node at (9.5,4.5) {$\bullet$};
    \draw[very thick] (3,1) -- (3,10);
    \draw[very thick] (5,1) -- (5,10);
    \draw[very thick] (9,1) -- (9,10);
  \end{tikzpicture}
  \begin{tikzpicture}[baseline=(A), scale=0.4]
    \node (A) at (1, 1) {};
    \draw (1, 1) grid (10, 10);
    \node at (1.5,8.5) {$\bullet$};
    \coordinate (P2) at (2.5,7.5); \node at (P2) {$\bullet$};
    \coordinate (P3) at (3.5,3.5); \node at (P3) {$\bullet$};
    \coordinate (P4) at (4.5,1.5); \node at (P4) {$\bullet$};
    \coordinate (P5) at (5.5,5.5); \node at (P5) {$\bullet$};
    \coordinate (P6) at (6.5,2.5); \node at (P6) {$\bullet$};
    \coordinate (P7) at (7.5,4.5); \node at (P7) {$\bullet$};
    \coordinate (P9) at (9.5,6.5); \node at (P9) {$\bullet$};
    \node at (8.5,9.5) {$\bullet$};
    \draw[dotted] (P2) -- (P3);
    \draw[dotted] (P3) -- (P4);
    \draw[dotted] (P5) -- (P6);
    \draw[very thick] (5,1) -- (5,10);
    \draw[very thick] (7,1) -- (7,10);
    \draw[very thick] (8,1) -- (8,10);
    \coordinate (A5) at (0.5,5.5);
    \coordinate (A6) at (0.5,2.5);
    \coordinate (A7) at (0.5,4.5);
    \coordinate (A9) at (0.5,6.5);
    \draw[dashed] (intersection of  A9--P9 and P2--P3) -- (P9);
    \draw[dashed] (intersection of  A5--P5 and P2--P3) -- (P5);
    \draw[dashed] (intersection of  A6--P6 and P3--P4) -- (P6);
    \draw[dashed] ([yshift=-2pt]intersection of A7--P7 and P2--P3) -- ([yshift=-2pt]P7);
    \draw[dashed] ([yshift=2pt]intersection of A7--P7 and P5--P6) -- ([yshift=2pt]P7);
  \end{tikzpicture}
  \begin{tikzpicture}[baseline=(A), scale=0.4]
    \node (A) at (1, 1) {};
    \draw (1, 1) grid (10, 10);
    \draw (1, 1) -- (10, 10);
    \coordinate (P1) at (1.5,3.5); \node at (P1) {$\bullet$};
    \coordinate (P2) at (2.5,7.5); \node at (P2) {$\bullet$};
    \coordinate (P3) at (3.5,5.5); \node at (P3) {$\bullet$};
    \coordinate (P6) at (6.5,8.5); \node at (P6) {$\bullet$};
    \coordinate (P7) at (7.5,9.5); \node at (P7) {$\bullet$};
    \coordinate (A1) at (1.5,1.5); \draw[dotted] (A1) -- (P1);
    \coordinate (A2) at (2.5,2.5); \draw[dotted] (A2) -- (P2);
    \coordinate (A3) at (3.5,3.5); \draw[dotted] (A3) -- (P3);
    \coordinate (A6) at (6.5,6.5); \draw[dotted] (A6) -- (P6);
    \coordinate (A7) at (7.5,7.5); \draw[dotted] (A7) -- (P7);
    \draw[dashed] (1.5,2.5) -- (9.5,2.5);
    \draw[dashed] ([yshift=-2pt]2.5,4.5) -- ([yshift=-2pt]9.5,4.5);
    \draw[dashed] ([yshift=2pt]3.5,4.5) -- ([yshift=2pt]9.5,4.5);
    \draw[dashed] (2.5,5.5) -- (P3);
    \draw[dashed] (2.5,6.5) -- (9.5,6.5);
  \end{tikzpicture}
  \caption{Descent views of $\pi = 52\,96\,8731\,4$ and
    $8731\,52\,4\,96$, and invisible inversions of $\sigma_e = 375??89??$.}
  \label{fig:des-views-inv-inv}
\end{figure}
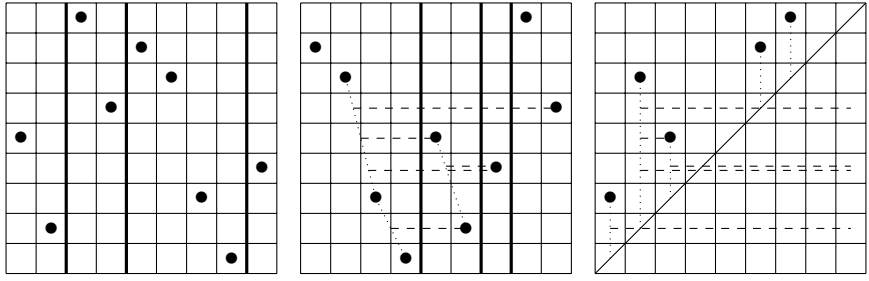

Let us conclude this section with an example.  Let
$\pi = 52\;96\;8731\;4$.  In the left most diagram of
\cref{fig:des-views-inv-inv} there is, for each $i$, a black dots in
the $i$-th column from the left and the $\pi(i)$-th row from the
bottom.  The descending runs of $\pi$ are separated by heavier
vertical lines.

The permutation $\overline\pi$ illustrated in the middle diagram is
obtained by sorting the set of descending runs of $\pi$ such that the
sequence of run bottoms increases.  Thus, the multisets
$\{\{\pi(j) \mid (i,j)\in\desview(\pi) \}\}$ and
$\{\{\overline\pi(j) \mid (i,j)\in\desview(\overline\pi)\}\}$ are the
same, but the multiset for $\overline\pi$ is easier to visualize.  In the
middle diagram, the descent views are indicated by horizontal dashed
lines, emanating from the dotted lines, which indicate the descents,
towards the right.

For both $\pi$ and $\overline\pi$ we have
$[n]\setminus\RB = (3,5,7,8,9)$ and the multiset of descent views is
$\{\{2,4,4,5,6\}\}$.  The only descent view which is not a run bottom
is $5$, and we have
$(\dv_{\pi}(v))_{v\in[n]\setminus\RB} = (0,1,0,0,0)$.

The partial permutation $\sigma_e = 375??89??$ is illustrated in the
right most diagram.  Additionally, the dashed horizontal lines
emanating from the vertical dotted lines indicate the invisible
inversions.  The only invisible inversion $(i, j)$ such that
$\sigma(j)$ is not a run bottom has $\sigma(i) = 5$.

There are twelve permutations with the same set of runs as $\pi$, and
twelve permutations whose restriction to $[n]\setminus\RT$ equals
$\sigma_e$ and which do not have any further exceedances.


\section{The weak deficiencies of \texorpdfstring{$\chi(\pi)$}{chi(pi)}}

\Cref{lem:sigma_exc} shows that the values of $\sigma$ on
$[n]\setminus\RT$ are determined by the first three requirements in
\cref{t:main}.  This is not the case for the values on the run tops
of $\pi$, even when imposing the final requirement stated in
\cref{t:main}.

In this section we provide two different ways to complete the
definition of $\sigma$.  Note that, if we ignore the final
requirement from \cref{t:main}, it suffices to encode the order of
the runs of $\pi$ in the bijection $\sigma_d: \RT\to\RB$.

In both variants we will make use of the following construction: let
$f$ be a partial permutation, that is an injective map $S\to [n]$
defined on a subset $S$ of $[n]$.  Then the \Dfn{iterated preimage}
$f^*(c)$ of an element $c\in [n]$ which is not contained in a cycle
of $f$ is the element $s\in S$ such that $f^k(s) = c$, with $k\geq 0$
maximal.  In particular, if $c$ is not in the image of $f$, then
$f^*(c) = c$.

We define the values of $\sigma$ on the set of run tops iteratively,
beginning with the smallest run top and removing the corresponding
runs from $\pi$.  Thus, throughout the algorithm, we consider
$\sigma$ as a partial permutation, which initially equals $\sigma_e$.
Note that $\sigma_e$ does not contain any cycles.

The first three steps of both variants are identical:
\begin{enumerate}
\item Let $t$ be the smallest run top in $\pi$.
\item Remove the run containing $t$ from $\pi$.
\item\label{finish} If $t$ was the first element of $\pi$, set
  $\sigma(t) = \sigma^*(t)$.
\end{enumerate}

In the following two sections, we provide two different ways of
specifying $\sigma(t)$ if there is a run bottom to the left of $t$.
In both cases, we set $\sigma(t) = \sigma^*(t')$ for some run top
$t' > t$ satisfying $\sigma^*(t') < t$.  Thus, throughout the
algorithm the functional digraph of $\sigma$ is a collection of paths
beginning with a run bottom and ending with a run top, which is also
the largest element of the path, together with a collection of
cycles.  In each iteration a run top is connected to a smaller run
bottom.  Assuming this, we can already conclude that the bijection
satisfies the final requirement of \cref{t:main}.
\begin{lemma}
  A cycle of $\sigma$ is completed in an iteration of the algorithm
  if and only if the smallest run top $t$ is the first element of
  $\pi$.  In this case, $t$ is the maximal element of the cycle.

  Therefore, the left-to-right maxima of $\pi$ are the maximal
  elements of the cycles of $\sigma$.
\end{lemma}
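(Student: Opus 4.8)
The plan is to exploit the structural invariant recorded just before the statement: at every stage of the algorithm the functional digraph of the partial permutation $\sigma$ is a disjoint union of cycles together with paths, each of which runs from a run bottom to a run top, the latter being the largest element on the path. The first thing I would pin down is that the run tops not yet processed are exactly the endpoints of these paths, and that distinct paths have distinct endpoints. Since we process run tops in increasing order, when $t$ is the smallest unprocessed run top it is the endpoint—hence the maximum—of the path on which it lies, and $\sigma^*(t)$ is the run bottom where that path begins.

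With this in hand I would prove the equivalence by splitting on the definition of $\sigma(t)$. If $t$ is the first element of the current $\pi$, then $\sigma(t)=\sigma^*(t)$ adds the edge from the endpoint $t$ back to the start of its own path, which closes that path into a cycle; as $t$ was the maximum of the path it is the maximum of the new cycle, giving the second assertion. If instead there is a run bottom to the left of $t$, then $\sigma(t)=\sigma^*(t')$ for a larger unprocessed run top $t'$; because a path has a unique run-top endpoint and $t'\ne t$, the element $t'$ lies on a different path, so the new edge merges the path ending at $t$ with the path ending at $t'$ rather than closing a cycle. I would also note in passing that, since $t<t'$, the merged path still has its endpoint $t'$ as maximum, so the invariant survives the iteration. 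This establishes that a cycle is completed precisely when $t$ is the first element of $\pi$, and that its maximum is then $t$.

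For the final sentence I would translate the condition ``$t$ is the first element of the current $\pi$'' into a condition on the original permutation. Since runs are removed in increasing order of their run tops, the runs deleted before the run containing $t$ are exactly those with run top smaller than $t$; hence $t$ becomes the first element exactly when every run to its left has run top $<t$, equivalently when every value preceding $t$ in $\pi$ is smaller than $t$, each value being at most its run top. That is the definition of $t$ being a left-to-right maximum, and conversely every left-to-right maximum begins a descending run and is thus a run top. Combining this with the equivalence above identifies the left-to-right maxima of $\pi$ with the maximal elements of the cycles of $\sigma$.

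The routine parts are the case analysis and the translation into left-to-right maxima; the one place I would be most careful is the second case, where $t$ is not the first element, making sure that connecting $t$ to $\sigma^*(t')$ genuinely merges two distinct paths and never accidentally closes a cycle. This hinges on $t'$ being an as-yet-unprocessed run top (guaranteed by $t'>t$ and the increasing processing order) and on paths having a single run-top endpoint, so that $\sigma^*(t')$ lies on a path different from the one ending at $t$. Verifying that the ``largest element is the endpoint'' invariant is preserved under these merges, despite the merged path no longer being monotone, is the only genuinely delicate bookkeeping.
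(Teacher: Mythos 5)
Your proposal is correct and follows essentially the same route as the paper: a case split on whether $t$ is the first element of what remains of $\pi$, with the cycle-closing case giving $t$ as the maximum (since only smaller run tops have been processed) and the other case merging two distinct paths because $t'\neq t$ forces $\sigma^*(t')\neq\sigma^*(t)$. You are merely more explicit than the paper about preserving the path invariant and about translating ``first element of the remaining permutation'' into ``left-to-right maximum of $\pi$'', both of which the paper leaves implicit.
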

\begin{proof}
  If the minimal run top $t$ is the first element of $\pi$, and
  therefore a left-to-right maximum, we complete a cycle of $\sigma$
  by setting $\sigma(t) = \sigma^*(t)$.  This cycle has maximal
  element $t$ because only run tops $t' < t$ have been assigned an
  image yet, and, by assumption, all run tops are weak deficiencies.

  Otherwise, we set $\sigma(t) = \sigma^*(t')$ for some run top
  $t' > t$.  Since the (partial) map $\sigma$ is injective,
  $\sigma^*(t')$ must be different from $\sigma^*(t)$ before this
  assignment, which means that no cycle is completed.
\end{proof}

\subsection{First variant}
\label{sec:first-variant}
In the following we will use an auxiliary map $\tau$, which we now
define.  Let $\rho:\RB\to\RT$ be the bijection that maps each run
bottom of $\pi$ to the corresponding run top in the same run.  Then,
given $\sigma$, we set $\tau = \sigma^* \circ \rho$, restricted to
the set of run bottoms remaining in $\pi$.  Note that $\tau$ has to
be recomputed whenever another value of $\sigma$ is determined.

For example, if $\pi = 52\;96\;8731\;4$, we have
\[
  \rho = \begin{smallmatrix}
    1 & 2 & 4 & 6\\ 8 & 5 & 4 & 9
  \end{smallmatrix}
  \text{ and }
  \sigma_e^* = \begin{smallmatrix}
    8 & 5 & 4 & 9\\ 6 & 1 & 4 & 2
  \end{smallmatrix}
\]
and therefore
\[
  \tau = \begin{smallmatrix}
    1 & 2 & 4 & 6\\ 6 & 1 & 4 & 2
  \end{smallmatrix}.
\]

We now specify the final step of the algorithm as follows:
\begin{enumerate}
  \setcounter{enumi}{(3)}
\item \label{continue-v1} Otherwise, let $a$ be the run bottom left
  of $t$, and let $k \geq 1$ be minimal such that $\tau^{k}(a) < t$.
  Set $\sigma(t) = \tau^{k}(a)$.
\end{enumerate}

Let us illustrate this procedure by resuming our example.  Initially,
we have
\[
  \pi = 52\;96\;8731\;4\qquad
  \sigma =
  \begin{smallmatrix}
    1 & 2 & 3 & 4 & 5 & 6 & 7 & 8 & 9\\
    3 & 7 & 5 &   &   & 8 & 9 &   &
  \end{smallmatrix}\qquad
  \tau = \begin{smallmatrix}
    1 & 2 & 4 & 6\\
    6 & 1 & 4 & 2
  \end{smallmatrix}.
\]
The smallest run top is $t=4$.  There is a run to the left of $t$,
with run bottom $a = 1$.  Since $\tau(a) = 6 > 4$, and
$\tau^2(a) = 2 < 4$, we set $\sigma(4)=2$.

Removing the run $4$ from $\pi$ and taking into account that now
$\tau(6)=\sigma^*(9)=4$ we obtain
\[
  \pi = 52\;96\;8731\qquad
  \sigma =
  \begin{smallmatrix}
    1 & 2 & 3 & 4 & 5 & 6 & 7 & 8 & 9\\
    3 & 7 & 5 & 2 &   & 8 & 9 &   &
  \end{smallmatrix}\qquad
  \tau = \begin{smallmatrix}
    1 & 2 & 6\\
    6 & 1 & 4
  \end{smallmatrix}.
\]
The smallest run top is now $t=5$.  Since $t$ is the first element of
$\pi$, we set $\sigma(5) = \sigma^*(5) = 1$.  We remove the run $52$
from $\pi$ and obtain
\[
  \pi = 96\;8731\qquad
  \sigma =
  \begin{smallmatrix}
    1 & 2 & 3 & 4 & 5 & 6 & 7 & 8 & 9\\
    3 & 7 & 5 & 2 & 1 & 8 & 9 &   &
  \end{smallmatrix}\qquad
  \tau = \begin{smallmatrix}
    1 & 6\\
    6 & 4
  \end{smallmatrix}.
\]
The smallest run top is now $t=8$.  There is a run to the left of
$t$, with run bottom $a = 6$.  Since $\tau(a) = 4 < 8$, we set
$\sigma(8)=4$.  We also remove the run $8731$ from $\pi$, which
leaves us with
\[
  \pi = 96\qquad
  \sigma =
  \begin{smallmatrix}
    1 & 2 & 3 & 4 & 5 & 6 & 7 & 8 & 9\\
    3 & 7 & 5 & 2 & 1 & 8 & 9 & 4 &
  \end{smallmatrix}\qquad
  \tau = \begin{smallmatrix}
    6\\
    6
  \end{smallmatrix},
\]
and it remains to set $\sigma(9) = \sigma^*(9) = 6$.

\begin{lemma}\label{l:tau-smaller}
  At any stage of the algorithm and for any run bottom~$b$ we either
  have $\tau(b) = \sigma_e^*(\rho(b))$ or $\tau(b)$ is less than or
  equal to the minimal remaining run top of $\pi$.
\end{lemma}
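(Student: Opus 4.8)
The plan is to prove the statement by induction on the iterations of the algorithm, keeping the displayed dichotomy as an invariant for $\tau$ at every stage. Initially $\tau=\sigma_e^*\circ\rho$, so every run bottom is in the first case. For the inductive step, assume the invariant holds at the stage just before the current smallest run top $t$ is processed, so that $t$ is the minimal remaining run top; write $b_t=\rho^{-1}(t)$ for the run bottom of $t$, which is removed together with the run of $t$, and let $t'>t$ denote the next smallest run top, i.e.\ the new minimal remaining run top.

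The first thing I would record is that for every run bottom $b$ still present the value $\rho(b)$ is an as-yet unprocessed run top: the run of $b$ has not been removed, so $\sigma$ is not yet defined on $\rho(b)$ and $\rho(b)$ is a sink of the current functional digraph of $\sigma$. Hence $\rho(b)$ lies on a path, and $\tau(b)=\sigma^*(\rho(b))$ is the run bottom sitting at the source of that path. Applying this to $b_t$ and using $\rho(b_t)=t$ yields the identity $\sigma^*(t)=\tau(b_t)$, which is the hinge of the whole argument.

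Next I would analyse how $\tau$ is altered when the single edge $t\mapsto c$ is adjoined, where $c=\sigma(t)$ is the value assigned in this iteration. Because $\sigma$ remains an injective partial map, $c$ is a source before the assignment: it equals an iterate $\tau^{k}(a)$ (or $\sigma^*(t)$), hence a value of $\sigma^*$, hence not in the image of $\sigma$. Prepending the path that ends at $t$ in front of $c$ therefore changes $\sigma^*(x)$ only for those $x$ lying on the path whose source was $c$, replacing their source $c$ by the previous value $\sigma^*(t)$. Translating through $\rho$, the only remaining run bottoms whose $\tau$-value changes are exactly those $b$ with $\tau(b)=c$, and each of them acquires the new value $\sigma^*(t)=\tau(b_t)$. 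In the exceptional case $c=\sigma^*(t)$, where the edge closes the path of $t$ into a cycle, the path with source $c$ is that of $t$ itself, whose unique sink is $t=\rho(b_t)$; the only remaining run bottom that could map onto it is $b_t$, which has just been removed, so in fact no remaining run bottom is affected, in agreement with the formula.

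Finally I would close the induction. For a remaining run bottom whose $\tau$-value is unchanged, the inductive hypothesis already gives $\tau(b)=\sigma_e^*(\rho(b))$ or $\tau(b)\le t<t'$. For a remaining run bottom whose value does change, the new value equals $\tau(b_t)$, and the inductive hypothesis applied to $b_t$ at the previous stage gives $\tau(b_t)=\sigma_e^*(\rho(b_t))=\sigma_e^*(t)$ or $\tau(b_t)\le t$; in the former subcase $\sigma_e^*(t)\le t$ because $\sigma_e(i)>i$ for all $i$ by \cref{lem:sigma_exc}, so iterating $\sigma_e$ strictly increases values and its iterated preimage of $t$ cannot exceed $t$. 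In every case the new value is $\le t<t'$, so the dichotomy holds again with respect to $t'$, completing the induction. I expect the main obstacle to be precisely this edge-insertion analysis: determining exactly how the iterated preimage $\sigma^*$, and with it $\tau$, is modified when one edge is added, and checking that the affected run bottoms are exactly those with $\tau(b)=c$, all receiving the common new value $\sigma^*(t)$. Once the identity $\sigma^*(t)=\tau(b_t)$ is available this update collapses to a single invocation of the inductive hypothesis at $b_t$, and the monotonicity of $\sigma_e$ settles the base of that invocation.
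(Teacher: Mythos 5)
Your proof is correct and follows essentially the same route as the paper's: an induction over the iterations, using the identity $\sigma^*(t)=\tau(b_t)$ to show that the only $\tau$-values modified in a step are those equal to the newly assigned $\sigma(t)$, which all become the old $\tau(b_t)$, and then bounding that value by $t$ via $\sigma_e^*(t)\le t$ or the inductive hypothesis at the stage of the earlier modification. The paper phrases the induction more tersely (``$\tau(b)$ is not larger than the minimal run top remaining when it was modified itself''), but the underlying argument is identical.
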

\begin{proof}
  Suppose that the minimal run top is $t$ and the corresponding run
  bottom is $b$.

  If $t$ is the first element of $\pi$, then, after setting
  $\sigma(t)$, only $b$ is removed from the domain of $\tau$, and no
  other values of $\tau$ are modified.

  Otherwise, let $c=\tau^{k-1}(a)$, and let
  $d=\tau(c)=\sigma^*(\rho(c))$.  Then after setting $\sigma(t)=d$,
  the value of $\tau(c)$ becomes
  \[
    \sigma^*(\rho(c)) = \sigma^*(\sigma^{-1}(d)) = \sigma^*(t) =
    \sigma^*(\rho(b)) = \tau(b).
  \]

  If $\tau(b)$ still has its original value, we have
  \[
    \tau(b) = \sigma_e^*(\rho(b)) = \sigma_e^*(t) \leq t.
  \]
  Otherwise, $\tau(b)$ is not larger than the minimal run top
  remaining when it was modified itself, which in turn must be
  smaller than $t$.
\end{proof}

\begin{lemma}
  In step~\eqref{continue-v1}, there exists an integer $k\geq 1$ such
  that $\tau^k(a) < t$.  We then also have that
  $\tau^k(a) = \sigma^*(t')$ for some run top $t' > t$.

  Thus, the algorithm is well defined and satisfies the assumption.
\end{lemma}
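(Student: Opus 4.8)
The plan is to establish the existence of $k$ first, since the second assertion and the final sentence will follow almost at once. Two preliminary observations set the stage. First, $a<t$: the run containing $t$ begins with its top $t$, and the run immediately to its left ends with its bottom $a$, so the ascent between consecutive runs forces $a<t$. Second, the second assertion is essentially built into the definition of $\tau$: since $\tau^k(a)=\sigma^*\bigl(\rho(\tau^{k-1}(a))\bigr)$, writing $t'=\rho(\tau^{k-1}(a))$ gives a run top with $\tau^k(a)=\sigma^*(t')$ for free, and it only remains to verify $t'>t$. As $\tau^{k-1}(a)$ is a run bottom still present in $\pi$, the map $\rho$ sends it to a run top at least the current minimal run top $t$, so $t'\ge t$. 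To rule out $t'=t$, note that $t'=t=\rho(b)$ (with $b$ the run bottom of the run containing $t$) would force $\tau^{k-1}(a)=b$ by injectivity of $\rho$; for $k=1$ this is $a\neq b$, and for $k\ge 2$ we would have either $b<t$, contradicting minimality of $k$, or $b=t$, which is excluded because the orbit of $a$ avoids the value $t$ (shown next).

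The heart of the argument, and the main obstacle, is the existence of $k$, which I would prove by contradiction: suppose $\tau^j(a)\not<t$ for every $j\ge 1$. I would first upgrade this to $\tau^j(a)>t$ for all $j\ge 1$. Here $\tau$ is injective, being the composite of the bijection $\rho$ with $\sigma^*$ restricted to the remaining run tops, which are sinks of the current partial $\sigma$ and are therefore traced back injectively to the path sources; moreover, if $t$ is itself a run bottom then $\rho(t)=t$ and $t$ is a source, so $\tau(t)=t$, and injectivity prevents the orbit of $a\neq t$ from ever meeting $t$. With $\tau^j(a)>t$ secured, I would invoke \cref{l:tau-smaller}, which bounds by $t$ any value of $\tau$ that differs from its original value $\sigma_e^*\!\circ\!\rho$: since each $\tau^{j+1}(a)>t$ is not $\le t$, the dichotomy forces $\tau^{j+1}(a)=\sigma_e^*\bigl(\rho(\tau^j(a))\bigr)$, so along this orbit $\tau$ coincides with the initial map $\tau_e=\sigma_e^*\circ\rho$. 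By \cref{lem:sigma_exc}, $\sigma_e$ is an increasing, hence cycle-free, bijection $[n]\setminus\RT\to[n]\setminus\RB$, so its paths run from the sources $\RB$ to the sinks $\RT$ and $\sigma_e^*$ sends run tops bijectively onto run bottoms; thus $\tau_e$ is a genuine permutation of $\RB$. Consequently the orbit of $a$ is a $\tau_e$-cycle $C$ and returns to $a$, giving $a=\tau^{|C|}(a)>t$ with $|C|\ge 1$, in contradiction to $a<t$.

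The difficult step is precisely this passage from the hypothesis ``the orbit never drops below $t$'' to the rigid conclusion that $\tau$ equals $\tau_e$ there, for which \cref{l:tau-smaller} is the indispensable input, followed by extracting a contradiction from the cyclic structure of $\tau_e$. Once $k$ exists, the rest is bookkeeping: we have $\tau^k(a)=\sigma^*(t')$ for a run top $t'>t$, and $\sigma^*(t')=\tau^k(a)<t$ by the choice of $k$. Hence setting $\sigma(t)=\tau^k(a)$ realises $\sigma(t)=\sigma^*(t')$ with $t'>t$ a run top and $\sigma^*(t')<t$, which is exactly the invariant needed to continue the induction and to conclude that the algorithm is well defined and meets the stated assumption.
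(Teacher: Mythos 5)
Your proof is correct and follows essentially the same route as the paper's: the existence of $k$ rests on \cref{l:tau-smaller} together with the fact that the initial $\tau=\sigma_e^*\circ\rho$ is a permutation of the run bottoms whose cycle through $a$ returns to $a<t$ (you phrase this as a contradiction where the paper uses a direct case split, but the content is identical), and the exclusion of $t'=t$ uses the same $\rho^{-1}(t)\le t$ and singleton-run fixed-point observations as the paper. No substantive differences to report.
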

\begin{proof}
  Initially, $\tau$ is a permutation of the run bottoms of $\pi$.
  Thus, if $\tau^\ell(a) = (\sigma_e^*\circ \rho)^\ell(a)$ for all
  $\ell$, the cycle of $\tau$ beginning with $\tau(a)$ also contains
  $a$, which is strictly smaller than $t$.

  Otherwise, if $\tau^\ell(a) \neq (\sigma_e^*\circ \rho)^\ell(a)$
  for some $\ell$, then $\tau^\ell(a) \leq s < t$ by
  \cref{l:tau-smaller}, where $s$ is the minimal run top of what
  remained of $\pi$ when the modification of
  $\sigma^*(\rho(\tau^{\ell-1}(a)))$ occurred.

  It remains to show that $\tau^k(a) = \sigma^*(t')$ for some run top
  $t' > t$.  To this end, let $t' = \sigma^m(\tau^k(a))$ with $m$
  maximal.  Then $t' \geq t$, because all run tops strictly smaller
  than $t$ have already been assigned an image.

  Suppose that $t' = t$, that is,
  $\tau^k(a) = \sigma^*(t') = \sigma^*(t)$.  Then
  $\tau^{k-1}(a) = \rho^{-1}(t) \leq t$.  By the minimality of $k$ we
  obtain that $\tau^{k-1}(a) = t = \rho(t)$.  However, if
  $\tau(c) = \sigma^*(\rho(c)) = t$ then $\rho(c) = t$, because $t$
  has no image, which in turn implies that $t$ must be a fixed point
  of $\tau$.  This contradicts the assumption that
  $\tau^k(a) = \sigma^*(t) = t$.
\end{proof}

\begin{proposition}
  \label{p:injective}
  $\chi$ is injective, and therefore a bijection.
\end{proposition}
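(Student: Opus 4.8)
The plan is to prove injectivity; since $\Sym_n$ is finite, injectivity at once yields bijectivity, so it suffices to reconstruct $\pi$ from $\sigma=\chi(\pi)$. The exceedances of $\sigma$ occupy exactly the positions $[n]\setminus\RT$: on these positions $\sigma$ agrees with $\chi_e(\pi)$ and $\sigma_e(i)>i$ by \cref{lem:sigma_exc}, while on the run tops the construction makes $\sigma$ a weak deficiency. Thus reading off the exceedance positions of $\sigma$ recovers $\chi_e(\pi)$, and \cref{lem:runs} then recovers the set of runs $R(\pi)$. Since two permutations with the same set of runs differ only in the left-to-right order of those runs, the whole problem reduces to recovering this order from $\sigma$ together with $R(\pi)$.

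To recover the order I would reverse the iterative construction, undoing its steps in the opposite order, that is, treating the run tops from the largest down to the smallest and inserting the corresponding runs one at a time into a growing arrangement of the already-treated (larger-topped) runs. The engine of the reversal is the path-and-cycle invariant: after undoing all run tops larger than $t$, the partial permutation at hand is precisely the forward state just after $t$ was processed, in which every run top exceeding $t$ is the maximal endpoint of its own path. This lets me classify how $\sigma(t)$ arose. If $t$ is the maximum of a cycle, the forward step was the cycle-closing case, so the run of $t$ was leftmost among the then-remaining runs and I place it at the front of the current arrangement. Otherwise $t$ is interior to a path ending at some larger run top, the forward step was the merging case of \eqref{continue-v1}, and I must locate the run immediately to its left.

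In the merging case the genuine work is to recover the run bottom $a$ lying immediately to the left of $t$, for this datum fixes the slot of the run of $t$ among the larger-topped runs. In the forward direction $a$ is the starting point of the walk $a,\tau(a),\dots,\tau^{k}(a)=\sigma(t)$ whose intermediate values $\tau(a),\dots,\tau^{k-1}(a)$ are all at least $t$. To invert it I would recompute $\tau=\sigma^*\circ\rho$ from the current partial permutation and trace this walk backwards from $\sigma(t)$ under $\tau^{-1}$. Two facts make the tracing well posed. First, $\tau$ is injective on the remaining run bottoms: each remaining run top is the distinct maximal endpoint of its own path, so $\sigma^*$ is injective on them, whence $\tau^{-1}$ is single valued. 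Second, by \cref{l:tau-smaller} every value of $\tau$ that exceeds the minimal remaining run top $t$ is its original value $\sigma_e^*(\rho(\cdot))$, so along the part of the walk lying above $t$ the map $\tau$ coincides with the fixed permutation $\sigma_e^*\circ\rho$, which is already known. Using this I can pin down where the forward walk began and so recover $a$; the run carrying $a$ as its bottom is then the left neighbor, and I insert the run of $t$ immediately to its right. Making the stopping rule that returns $a$ precise, and checking it in the boundary cases---in particular singleton runs, where a run top is simultaneously a run bottom---is the main obstacle of the proof, since it is the only place where the evolving map $\tau$ must be disentangled from its initial value.

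Carrying this out for every run top produces a forced sequence of insertions, hence a unique left-to-right order of the runs, and therefore a unique $\pi$ with $\chi(\pi)=\sigma$. This exhibits $\pi$ as a function of $\sigma$ and proves that $\chi$ is injective, and thus a bijection.
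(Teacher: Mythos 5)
Your overall architecture mirrors the paper's: recover $\chi_e(\pi)$ from the exceedances, invoke \cref{lem:runs} to get the set of runs, and then reduce everything to recovering the left-to-right order of the runs via the walk $a,\tau(a),\dots,\tau^k(a)=\sigma(t)$. The paper phrases this as a direct two-permutation comparison (if $\tau^{k_1}(a_1)=\tau^{k_2}(a_2)$ with $k_1<k_2$, injectivity of $\tau$ forces $\tau^{k_2-k_1}(a_2)=a_1$, contradicting minimality of $k_2$), while you phrase it as an explicit backward reconstruction; these are the same argument seen from opposite ends. However, you stop exactly at the point where the work happens: you concede that ``making the stopping rule that returns $a$ precise \dots is the main obstacle of the proof.'' That stopping rule \emph{is} the proof of injectivity --- without it your backward trace under $\tau^{-1}$ could equally well halt at $\tau^{k-1}(a)$, at $a$, or at $\tau^{-1}(a)$, and the reconstruction is not a function. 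So as written the proposal is a correct plan with its central step unproven.

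The missing ingredient is elementary but essential: the run bottom $a$ immediately to the left of $t$ is the \emph{last} entry of a maximal descending run, and maximality forces $a<t$ (otherwise $a$ would absorb $t$ into its own run). Combined with the minimality of $k$, which gives $\tau^j(a)\geq t$ for $1\leq j\leq k-1$, this yields the unambiguous stopping rule ``iterate $\tau^{-1}$ starting from $\sigma(t)$ until the value first drops below $t$; that value is $a$.'' Uniqueness is immediate because the backward orbit consists of values $\geq t$ until it first hits $a<t$, which is precisely the contradiction the paper extracts in its second case (and the paper's first case, where $t$ heads its remaining runs, is covered by your cycle test together with the lemma on step~\eqref{continue-v1} guaranteeing $\sigma(t)\neq\sigma^*(t)$ in the merging case). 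You also did not verify that the state you recompute $\tau$ from is the pre-$t$ state rather than the post-$t$ state (you must first delete the assignment $\sigma(t)$ before forming $\sigma^*\circ\rho$), though that is a minor bookkeeping point compared with the stopping rule.
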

\begin{proof}
  Suppose that $\chi(\pi_1) = \chi(\pi_2)$.  By \cref{lem:runs},
  $\pi_1$ and $\pi_2$ have the same set of runs, which implies that
  the same $\tau$ will be used for both to compute the weak deficiencies
  of $\chi(\pi_1)$ and $\chi(\pi_2)$.  We show that the sequence of
  runs is the same in $\pi_1$ and $\pi_2$.

  Suppose that there is a run, with run top $t$ and run bottom $b$,
  which is the first run in what remains of $\pi_1$ but in what
  remains of $\pi_2$ the run bottom $a$ precedes $t$.  Then
  $\chi(\pi_1)(t) = \sigma^*(t) = \tau(b) = \tau^k(a) = \chi(\pi_2)(t)$.  Since
  $\tau$ is injective, we therefore have $\tau^{k-1}(a) = b \leq t$,
  which contradicts the minimality of $k$.

  Finally, suppose that the currently minimal run top $t$ is preceded
  by $a_1$ in what remains of $\pi_1$, and by $a_2$ in what remains
  of $\pi_2$.  Then we have
  $\chi(\pi_1)(t) = \tau^{k_1}(a_1) = \tau^{k_2}(a_2) =
  \chi(\pi_2)(t)$.  Without loss of generality, we may assume that
  $k_1 < k_2$.  Since $\tau$ is injective,
  $\tau^{k_2-k_1}(a_2) = a_1 \leq t$, contradicting the minimality of
  $k_2$.
\end{proof}

\subsection{Second variant}
\label{sec:second-variant}

We specify the final step of the algorithm as follows:
\begin{enumerate}
  \setcounter{enumi}{(3)}
\item\label{continue} Otherwise, let $i$ be the index of the run
  bottom left of $t$ in the sequence $b_1 < b_2 < \dots$ of run
  bottoms in what remains of $\pi$ and let $s_i$ be the $i$-th
  smallest element other than $s = \sigma^*(t)$ which is not yet in
  the image of $\sigma$.  Set $\sigma(t) = s_i$.
\end{enumerate}

Again, let us illustrate this procedure by resuming our example.
Initially, we have
\[
  \pi = 52\;96\;8731\;4\qquad
  \sigma =
  \begin{smallmatrix}
    1 & 2 & 3 & 4 & 5 & 6 & 7 & 8 & 9\\
    3 & 7 & 5 &   &   & 8 & 9 &   &
  \end{smallmatrix}
\]
The smallest run top is $t=4$, with corresponding run bottom $b=4$.
There is a run to the left of $t$, whose run bottom $1$ is the
smallest among the run bottoms in $\pi$ other than $b=4$.  The first
element other than $\sigma^*(t)=4$ which is not in the image of
$\sigma$ is $1$, so we set $\sigma(4)=1$.

Removing the run $4$ from $\pi$ we obtain
\[
  \pi = 52\;96\;8731\qquad
  \sigma =
  \begin{smallmatrix}
    1 & 2 & 3 & 4 & 5 & 6 & 7 & 8 & 9\\
    3 & 7 & 5 & 1 &   & 8 & 9 &   &
  \end{smallmatrix}
\]
The smallest run top is now $t=5$.  Since $t$ is the first element of
$\pi$, we set $\sigma(5) = \sigma^*(5) = 4$.

We remove the run $52$ from $\pi$, to obtain
\[
  \pi = 96\;8731\qquad
  \sigma =
  \begin{smallmatrix}
    1 & 2 & 3 & 4 & 5 & 6 & 7 & 8 & 9\\
    3 & 7 & 5 & 1 & 4 & 8 & 9 &   &
  \end{smallmatrix}
\]
The smallest run top is now $t=8$, with corresponding run bottom
$b=1$.  There is a run to the left of $t$, whose run bottom $6$ is
the smallest among the run bottoms in $\pi$ other than $b=1$.  The
first element other than $\sigma^*(t)=6$ which is not in the image of
$\sigma$ is $2$, so we set $\sigma(8)=2$.

Finally, we remove the run $8731$ from $\pi$.  This leaves us with
\[
  \pi = 96\qquad
  \sigma =
  \begin{smallmatrix}
    1 & 2 & 3 & 4 & 5 & 6 & 7 & 8 & 9\\
    3 & 7 & 5 & 1 & 4 & 8 & 9 & 2 &
  \end{smallmatrix}
\]
and it remains to set $\sigma(9) = \sigma^*(9) = 6$.

\begin{lemma}
  In step~\eqref{continue} of the algorithm, the number of run
  bottoms strictly less than $t$ equals the number of elements
  strictly less than $t$ other than $s$ which are not in the image of
  $\sigma$.

  Thus, the algorithm is well defined.  Moreover, the assumption is
  satisfied.
\end{lemma}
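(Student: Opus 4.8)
The plan is to establish the displayed counting identity first, and then read off from it both the well-definedness of the step and the maintenance of the structural invariant. I argue by induction on the iterations, assuming that at the start of the current step the invariant stated before the two variants holds: the functional digraph of the partial map $\sigma$ is a disjoint union of cycles together with paths, each path running from a run bottom (an element not in the image of $\sigma$) up to an unprocessed run top that is the maximum of the path, and each already-assigned run top $t''$ satisfies $\sigma(t'')\le t''$.

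First I set up the bookkeeping. Since run tops are processed in increasing order, exactly the run tops below $t$ have been processed; let $p$ be their number. Each such $t''$ was sent to a distinct run bottom with $\sigma(t'')\le t''<t$, so the used images form a set of $p$ distinct run bottoms in $[t-1]$. As the image of $\sigma_e$ is $[n]\setminus\RB$, the elements missing from the image of the current $\sigma$ are exactly the run bottoms not yet used, whence
\[
\#\{w<t:\ w\notin\operatorname{im}\sigma\}=\#(\RB\cap[t-1])-p.
\]
Partitioning the run bottoms below $t$ according to whether the run top of their run is less than, equal to, or greater than $t$, the $p$ removed runs contribute $p$, the current run contributes $\beta\in\{0,1\}$ (namely $1$ iff it is non-singleton), and the remaining runs contribute $N:=\#\{\text{run bottoms of what remains of }\pi\text{ that are }<t\}$. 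Hence $\#(\RB\cap[t-1])=p+\beta+N$, so the missing values below $t$ number $\beta+N$.

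The crux, and the step I expect to be the main obstacle, is to show that the single excluded value $s=\sigma^*(t)$ contributes to this count exactly when $\beta=1$. If the current run is a singleton then $t\in\RB$, so $t\notin\operatorname{im}\sigma_e$; as every processed image is $<t$, the value $t$ is not in the image at all, and the definition of the iterated preimage gives $s=\sigma^*(t)=t$, which is not below $t$. If the run is non-singleton then $t\in\RT\setminus\RB\subseteq\operatorname{im}\sigma_e$, so $t$ is the endpoint of an honest path; by the invariant $t$ is the maximum of that path and its start $s$ is a run bottom not in the image, so $s<t$ and $s\notin\operatorname{im}\sigma$. Since $s$ is never in the image, excluding it lowers the count by exactly $\beta$, giving $\#\{w<t:\ w\neq s,\ w\notin\operatorname{im}\sigma\}=(\beta+N)-\beta=N$, which is the asserted identity.

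Finally I deduce the two assertions. Let $a=b_i$ be the run bottom of the run immediately preceding that of $t$ in what remains of $\pi$. I first note $a<t$: the run that originally follows the run of $a$ --- the run of $t$ if they are adjacent, otherwise a removed run whose run top is below $t$ --- begins with a value that exceeds $a$ yet is at most $t$, forcing $a<t$. Therefore $i=\#\{\text{remaining run bottoms}\le a\}\le N$, so by the identity there are at least $i$ admissible values (distinct from $s$, missing from the image) below $t$; the $i$-th smallest admissible value $s_i$ is thus $<t$, and setting $\sigma(t)=s_i$ yields a deficiency, which proves well-definedness. Moreover $s_i$ is a run bottom missing from the image with $s_i\neq s$, hence the start of a path; as $s_i<t$ it cannot be an isolated run top, so its path ends at an unprocessed run top $t'\ge t$, and $t'=t$ would give $s_i=\sigma^*(t)=s$. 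Thus $t'>t$ and $\sigma(t)=s_i=\sigma^*(t')<t$, which is precisely the assumption; the invariant is maintained and the induction closes.
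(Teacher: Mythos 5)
Your proof is correct and follows essentially the same route as the paper's: both arguments show, by tracking the iterations, that the remaining run bottoms below $t$ (other than $b$) and the values below $t$ missing from the image of $\sigma$ (other than $s$) stay equinumerous, and then read off well-definedness and the path invariant from that count. You merely make the bookkeeping more explicit than the paper does --- in particular the case split on whether $t$'s run is a singleton, and the verification that the run bottom immediately left of $t$ is smaller than $t$ --- both of which the paper leaves implicit.
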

\begin{proof}
  We consider the effect of a single iteration of the algorithm on
  the set of run bottoms of what remains of $\pi$ and the set of
  elements which are not yet in the image of $\sigma$.  To do so,
  initialise the two sets $B$ and $S$ with the initial set of run
  bottoms $\RB$ of $\pi$.  After each iteration, remove the run
  bottom $b$ in the same run as $t$ from $B$ and remove $\sigma(t)$
  from $S$.  Thus, only elements smaller than or equal to $t$ are
  removed from $B$.

  If step~\eqref{finish} is executed, also an element smaller than or
  equal to $t$ is removed from $S$, because $\sigma^*(t) \leq t$.  On
  the other hand, suppose that in previous iterations only elements
  smaller than or equal to the then current run top were removed from
  $S$.  Then, in step~\eqref{finish}, the number of elements in
  $S\setminus s$ which are less than or equal to $t$ equals the
  number of elements in $B\setminus b$ which are less than or equal
  to $t$.  Since $b_i\in B\setminus b$, we conclude that
  $s_i\in S\setminus s$.

  Finally, since $s_i \neq \sigma^*(t)$ and all elements smaller than
  $t$ have an image, $s_i = \sigma(t')$ for some run top strictly
  larger than $t$.
\end{proof}

\begin{proposition}
  \label{p:injective2}
  $\chi$ is injective, and therefore a bijection.
\end{proposition}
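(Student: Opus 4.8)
The plan is to follow the template of \cref{p:injective}, proving directly that $\chi(\pi_1)=\chi(\pi_2)$ forces $\pi_1=\pi_2$. First I would apply \cref{lem:runs} to conclude that $\pi_1$ and $\pi_2$ have the same set of runs. Consequently both executions of the algorithm process exactly the same run tops in the same increasing order, start from the same partial permutation $\sigma_e=\chi_e(\pi_1)=\chi_e(\pi_2)$, and share the same common output $\sigma=\chi(\pi_1)=\chi(\pi_2)$. The goal then reduces to showing that the two executions make the same decision at every iteration, since the sequence of decisions---whether the current run top is the first element, and if not, which remaining run bottom precedes it---determines the order in which the runs are assembled, and hence the permutation itself.

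I would then argue by induction on the iterations, examining the first step at which the two executions could differ, exactly as in \cref{p:injective}. Suppose all decisions up to the current iteration agreed. Then the remaining runs coincide, and since only run tops strictly smaller than the currently minimal run top $t$ have been assigned an image so far, and these images are read from the common $\sigma$, the partial permutation $\sigma$ is literally identical in the two executions at this stage. In particular $s=\sigma^*(t)$ and the set of elements not yet in the image of $\sigma$ are the same for $\pi_1$ and $\pi_2$, so that the notation $s_i$ refers to the same $i$-th smallest element in both. Because the value $\chi(\pi_1)(t)=\chi(\pi_2)(t)$ is fixed, it then remains only to rule out two disagreeing decisions producing this common value.

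Finally I would carry out the short case analysis. If $t$ were the first element in what remains of $\pi_1$ but were preceded by a run bottom in what remains of $\pi_2$, then step~\eqref{finish} gives $\chi(\pi_1)(t)=\sigma^*(t)=s$, whereas step~\eqref{continue} gives $\chi(\pi_2)(t)=s_i\neq s$ by the very choice of $s_i$ as an element other than $s$, a contradiction. If instead $t$ is preceded by run bottoms of indices $i_1$ and $i_2$ in the two remaining permutations, then $s_{i_1}=\chi(\pi_1)(t)=\chi(\pi_2)(t)=s_{i_2}$; since $s_i$ is strictly increasing in $i$, this forces $i_1=i_2$ and hence the same preceding run bottom $b_{i_1}=b_{i_2}$. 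Thus every decision agrees, so $\pi_1=\pi_2$, and as $\chi$ is an injection of the finite set $\Sym_n$ into itself it is a bijection. I expect the main delicate point to be the inductive synchronisation claim: that the partial permutation, and therefore both $s=\sigma^*(t)$ and the current image of $\sigma$, coincide across the two executions at the step under consideration. Once this is secured, the case analysis above is immediate.
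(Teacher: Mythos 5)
Your proposal is correct and follows essentially the same route as the paper: invoke \cref{lem:runs}, consider the first iteration at which the two executions could disagree, and rule out both a first-element/non-first-element mismatch (via $s\neq s_i$) and a mismatch of preceding run bottoms (via the injectivity of $i\mapsto s_i$). You merely spell out the inductive synchronisation of the partial permutations that the paper leaves implicit.
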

\begin{proof}
  Suppose that $\chi(\pi_1) = \chi(\pi_2)$.  By \cref{lem:runs},
  $\pi_1$ and $\pi_2$ have the same set of runs.  We show that the sequence of
  runs is the same in $\pi_1$ and $\pi_2$.

  Suppose that there is a run, with run top $t$, which is the first
  run in what remains of $\pi_1$ but in what remains of $\pi_2$ the
  run bottom $b_i$ precedes $t$.  Then $s = \sigma^*(t) = s_i$, which
  is impossible.

  Otherwise, suppose that the minimal run top $t$ is preceded by
  $b_i$ in what remains of $\pi_1$, and by $b_j$ in what remains of
  $\pi_2$.  Then we have $s_i = s_j$ and therefore also $i = j$.
\end{proof}

%
%

\printbibliography
\end{document}